\newcommand{\cb}{{\mathcal B}}   
\newcommand{\cc}{{\mathcal C}}
\newcommand{\cf}{{\cal F}}
\newcommand{\ck}{{\mathcal K}}
\newcommand{\cu}{{\mathcal U}}
\newcommand{\R}{\mathbb{R}}   
\newcommand{\N}{\mathbb{N}}  
\newcommand{\Pp}{\mathbb{P}}
\newcommand{\vf}{{\varphi}}
\renewcommand*{\d}{\mathrm{d}}
\newcommand{\ds}{\displaystyle}
\newcommand{\ind}[1]{\mathbbm{1}_{#1}}
\newcommand{\dint}{\mathrm{d}}
\newtheorem{thm}{Theorem}[section]
\newtheorem{prop}[thm]{Proposition}
\newtheorem{rem}[thm]{Remark}  
\numberwithin{equation}{section}
\begin{document}

\noindent
{\Large\bf Stochastic equation of fragmentation
and branching \\ 
processes related to avalanches}\\[2mm]

\noindent 
{\large 
Lucian Beznea\footnote{Simion Stoilow Institute of Mathematics  of the Romanian Academy, Research unit No. 2,  
P.O. Box \mbox{1-764,} RO-014700 Bucharest, Romania, and University of Bucharest, Faculty of Mathematics and Computer Science.  
E-mail: lucian.beznea@imar.ro}, 
Madalina Deaconu\footnote{Inria, Villers-l\` es-Nancy, F-54600, France;
Universit\'e de Lorraine, CNRS, Institut Elie Cartan de Lorraine - UMR 7502, Vandoeuvre-l\`es-Nancy, F-54506, France.  E-mail:  Madalina.Deaconu@inria.fr}, 
and
Oana Lupa\c scu\footnote{ Inria, Villers-l\` es-Nancy;
Universit\'e de Lorraine, CNRS, Institut Elie Cartan de Lorraine--UMR 7502, Vandoeuvre-l\`es-Nancy, France.
{\it Current address:} Institute of Mathematical Statistics and Applied Mathematics of the Romanian Academy,  Calea 13 Septembrie 13, Bucharest, Romania, and the  Research Institute of the University of Bucharest (ICUB).
 E-mail: oana.lupascu@yahoo.com}
}

\vspace{7mm}

\noindent 
{\bf Abstract.} 
We give a stochastic model for the fragmentation phase of a snow avalanche. 
We construct a  fragmentation-branching process related to the avalanches, 
on the set of all fragmentation sizes introduced by J. Bertoin. 
A fractal property of this process is emphasized. 
We also establish a specific stochastic equation of fragmentation. 
It turns out that specific branching Markov processes on finite configurations of  particles 
with sizes bigger than a strictly positive threshold are convenient for describing the continuous time evolution of the number of the resulting fragments. 
The results are obtained by combining analytic and probabilistic potential theoretical  tools.

\vspace{4mm}

\noindent {\bf Key words:} 
Fragmentation kernel, avalanche, branching process, 
stochastic equation of fragmentation, space of fragmentation sizes\\

\noindent
{\bf Mathematics Subject Classification (2010):}
60J80, 
60J45, 
60J35, 
60K35 
\\


\section{Introduction}\label{1}

The snow avalanches were studied from different points of view.
Deterministic avalanche models of Saint-Venant type, including numerical simulations, were investigated, e.g. \cite{PuHu07}, \cite{WHP04}, \cite{GrAn09}, \cite{BouFe08}, \cite{IoLu15}, and \cite{IoLu15a}.

Fractal properties were also emphasized in a natural way, including probabilities.
A fractal model for grain size distribution of a snow avalanche, 
by introducing the concept of aggregation probability, as a coagulation mechanism, is developed in \cite{FaLoGr03}.
The fractal character of the snow has been studied  in the paper \cite{DeBiCh12}. 
In  \cite{CaChFr10} are investigated the scaling properties of snow density in a stochastic fractal framework, 
able to reproduce the local randomness of real microstructure. 

There are several attempts to relay the rock avalanches to some fragmentation  processes; cf. 
\cite{DeBl11}, \cite{DeBl14}, and \cite{GrAn09}, however, the used fragmentation process is deterministic.  
In the articles \cite{GaSaSc93},  \cite{Le04}, and \cite{Za95} a discrete time branching process is associated to an avalanche. 

In this paper we give a stochastic model for the fragmentation phase of a snow avalanche. 
The continuous time evolution of the number of the resulting fragments is described by using specific branching Markov processes.
More precisely, using a recent approach for the fragmentation processes developed in \cite{BeDeLu14}, we construct a 
fragmentation-branching process related to the avalanches, on the set of all fragmentation sizes 
($:=$ the set $S^{\downarrow}$ of all decreasing numerical sequences bounded above from $1$ and with limit $0$;
cf. \cite{Ber06}). 
A fractal property of this process is proved.
We use analytic and probabilistic methods  from the potential theory associated to semigroups and resolvents  of kernels.

The description of our avalanche model is the following.
We fix a rupture factor $r\in (0,1)$, corresponding to an uniform proportionality of the fragments, constant in time.
This specific property of an avalanche will be encoded in our given discontinuous fragmentation kernel.
A Markov chain is induced in a natural way: the state space is $E=[0,1]$, regarded as the space of all fragment sizes, from a point $x\in E$ it is possible to move to the point $\beta x$ and $(1-\beta)x$ with the respective probabilities $\beta$ and $1-\beta$, 
where $\beta: =\frac{r}{1+r}$.
It is an immediate observation that if we assume $\beta<\frac{1}{2}$ then the probability of occurring a bigger fragment is bigger then a smaller one. 
A time continuous Markov chain (actually, a continuous time random walk) is then canonically constructed, it is an analog  of  the Markov process constructed from a fragmentation equation associated to a continuous fragmentation kernel;  see \cite{BeDeLu14} for details and other references.


In the next section we  recall  (for the reader convenience) some properties on the compound Poisson pure jump processes
and we introduce  the discontinuous fragmentation kernels for avalanches we need further to develop  our stochastic model.
We fix a sequence of thresholds $(d_n)_{n\geqslant 1}\subset(0,1)$ strictly decreasing to zero.
In Proposition \ref{prop2.1} we establish compatibility properties between the continuous time Markov chains $X^n$, $n\geqslant 1$,
constructed on each segment $[d_n, 1]$. 
In addition, every process $X^n$  enables to solve a corresponding martingale problem, cf. assertion $(ii)$ of Remark \ref{rem2.1}.
Note that  we cannot associate a stochastic differential equation of fragmentation  with the method from
\cite {fournier-giet} and \cite{BeDeLu14}, because in the present context the fragmentation kernel is discontinuous. 
However, in Section 3 we write down in Theorem \ref{thm3.1} the corresponding  stochastic equation of fragmentation, that is, 
having the  solution equal in distribution with   $X^n$. 
We use essentially the already mentioned existence of the solution of the martingale problem and the technique  from \cite{Jac79} to 
derive a solution for the associated stochastic integro-differential equation, 
which will eventually  lead to the claimed stochastic equation of fragmentation.

As for the case of the fragmentation equation, we are interested in studying the evolution in time of the number of  fragments of the avalanche,  having the same size.  
In Section 4, to handle such a problem,  we first investigate the time evolution of fragments bigger then a given strictly positive threshold $d_n$. 
We consider an adequate kernel controlling the branching  mechanism compatible with the discontinuous fragmentation kernel for the avalanches.  
We  are able to follow the procedure stated in \cite{BeDeLu14}: 
first, we construct  for each $n$ a branching process on the finite configurations of $E$, greater than $d_n$, 
having $X^n$ as base process (cf. Theorem \ref{thm4.1}; for a probabilistic description of this branching process see Remark \ref{rem3.2}), 
and then we project these processes on the set of all fragmentation sizes,  in order to obtain the desired fragmentation process.
We prove in Theorem \ref{thm3.2} that this is actually a branching process with state space $S^{\downarrow}$ 
and that  starting from a sequence of fragmentation sizes $(x_k)_{k\geqslant 1}\in S^{\downarrow}$, 
it is possible to restrict the process to the sequences of sizes of the form 
$\beta^i(1-\beta)^j x_k$, $i,j\in\mathbb{N}$, $k\geqslant 1$.
This emphasizes that an avalanche has a  fractal property, 
which is rather an infinite dimensional one, 
having its origin   in the physical avalanche models studied in the papers mentioned in the first part of the Introduction; 
for more details see assertions $(iii)$ and $(iv)$ of the  Final Remark of the paper.

\section{Fragmentation kernels for avalanches}

\noindent
{\bf First order generators of jump processes.}
Let $(E, \mbox{\sf dist})$ be a metric space, denote by $\cb(E)$ the Borel 
$\sigma$-algebra of $E$, and by $p\cb(E)$ (resp. $b\cb(E)$, $bp\cb(E)$) 
the set of all real-valued positive  (resp. bounded, positive and bounded) $\cb(E)$-measurable functions on $E$. 
We endow $b\cb(E)$ with the sup norm $|| \cdot ||_{\infty}$ and 
denote by $C_l(E)$ the set of all bounded Lipschitz continuous real-valued functions on $E$.\\

Let  $N$ be a kernel on $(E, \cb(E))$, $N\not= 0$.
For all $x\in E$ we denote by $N_{x}$ the measure on $E$  induced by the kernel $N$,  
$$
N_{x}(A):=N (1_A)(x) \mbox{ for all } A\in \cb(E).
$$

Suppose  that
\begin{equation} \label{lipsch}
\int_E \mbox{\sf dist}(x,y) N_x(\mathrm{d}y)<\infty\ \mbox{ for all }\  x\in E
\end{equation}
and  define the {\it first order integral operator} $\widetilde{N}$ induced by $N$ as
$$
\widetilde{N}f(x):= \int_E [f(y)- f(x)] N_x(\mathrm{d}y) \quad \forall f\in bp\cb(E)  \quad \forall  x\in E.
$$
The operator $\widetilde{N}$ is well-defined on the space  $C_l(E)$:
the function $\widetilde{N}f$  is a $\cb(E)$-measurable real-valued function
for every $f\in C_l(E)$.\\

Recall that under various specific conditions (imposed on the kernel $N$, see  \cite{EthKu86}) the linear operator
$\widetilde{N}$ becomes the generator of a Markov process $X=(X_t)_{t\geqslant 0}$ with c\`adl\`ag trajectories,
such that $N$ is the L\'evy kernel of $X$, i.e., 
$$
\mathbb{E}^x \sum_{s\leq t} f(X_{s-}, X_s) = \mathbb{E}^x \int_0^t \int_E f(X_s, y) N_{X_s} (\mathrm{d}y) ds,
$$
for all  $x\in E$ and $f\in bp\cb(E\times E),$  $f=0$ on the diagonal
of $E\times E$.\\

\noindent
{\bf Examples.} We present now several examples of such operators and processes, occurring in
the study of the fragmentation phenomena and of the avalanches.\\

\noindent
{\bf 1. The case of a continuous fragmentation kernel.}
Consider  a {\it fragmentation kernel} $F$, that is,  a symmetric function 
$F: (0, 1]^2\longrightarrow \mathbb{R}_+$, and
recall that $F(x,y)$  represents the rate of fragmentation of a particle of size $x+y$ into two particles of sizes $x$ and $y$.
If $F$ is a continuous function then there exists an associate  stochastic differential equation, as it is  obtained in 
\cite {fournier-giet} and \cite{BeDeLu14}.
More precisely, assume that the  {fragmentation kernel} $F: (0, 1]^2 \longrightarrow \mathbb{R}_+$ 
is a continuous symmetric map. Moreover, $F$ is supposed continuous from $[0,1]^2$ to $\mathbb{R}_+\cup \{+\infty\}$
and  define the function 
$$
\psi(x) =
\left\{
\begin{array}{ll}
\ds\frac{1}{x}\ds\int_0^{x} y(x-y) F(y,x-y) \dint y  & \mbox{ for } x>0,\\[6mm]
0 &  \mbox{ for } x=0,
\end{array}
\right.
$$
which is supposed continuous on $[0, 1]$.  { The function $\psi(x)$ represents the {\it rate of loss of mass of particles of mass x}.} 
With the notations from \cite{fournier-giet} consider the operator $\cf$ defined as
$$
{\cal{F}} f (x) =\ds\int_0^x [f(x-y)- f(x)]\ds\frac{x-y}x{} F(y,x-y)\dint y, \ x\in [0,1]. 
$$
We write $\cf$ as a first order integral operator. 
Let $E=[0,1]$ and consider the kernel
$N^F$ defined as
\begin{equation}\label{ec2.3}  
N^Ff(x):= \int_0^x  f(z) \frac {z}{x} F(x-z, z)\mathrm{d}z,\;\;\; x\in E.
\end{equation}
The condition imposed to the function $\psi$ implies   that $N^F$ satisfies the integrability condition  (\ref{lipsch})
and one can see immediately that $\cf= \widetilde{N^F}$ on $C_l(E)$, that is, 
$$
\cf f(x)= \int_0^1 [f(z)- f(x)] N^F_x (\mathrm{d}z)  \quad  \forall f \in C_l(E)  \quad \forall x\in E.
$$
Observe also that $N^F_0=0$ and 
\begin{equation} \label{2.3}
N^F_x(\mathrm{d}z) =  \frac{z}{x} F(x-z,z)\ind{(0,x)}(z)\mathrm{d}z,\; \mbox{ if } \  x>0.
\end{equation}
By Proposition 2.2 from \cite{BeDeLu14}, using the existence and the uniqueness of the solution 
of the associated stochastic differential equation of fragmentation,
under some mild conditions, $\widetilde{N^F}$ becomes  the generator
of a  $C_0$-semigroup of contractions on $\mathcal{C}(E)$ and consequently 
of a standard (Markov) process  with state space $E$.

In general,  we cannot associate such a stochastic differential equation of fragmentation  because we do not assume that $F$ is continuous. 
However, in Section 3 below we succeed to  write down a stochastic equation of fragmentation, appropriate to the avalanches.
\\

\noindent
{\bf  2. The case of a  bounded kernel $N$.} 
For the reader convenience we present now the classical  situation of a bounded kernel 
(see, e.g.,  \cite{EthKu86}, page 163), as we need to apply it to the fragmentations kernels for avalanches.

Assume that $N1< \infty$ and denote by  $\lambda(x)$ the total mass of the (finite) measure $N_x$, $x\in E$,
$$
\lambda(x):= N1(x)\in\R_+
$$
and consider the induced normalized Markovian kernel $N^o$,
$$
N^o  =\frac{1}{\lambda} N. 
$$
Consequently we have
\begin{equation}\label{ntilda}
\widetilde{N}f(x):= \lambda(x) \int_E [f(y)- f(x)] N^o_x(\mathrm{d}y) \quad \forall f\in bp\cb(E).
\end{equation}

Suppose that $N$ is a bounded kernel.
Then $\widetilde{N}$ becomes a bounded linear operator on $b\cb(E)$,
$$
\widetilde{N}= N-\lambda I, 
$$
and it is the generator of a $C_0$--semigroup $(P_t)_{t\geqslant 0}$ on $b\cb(E)$: 
$$
P_t:=\mathrm{e}^{t\widetilde{N}},\ t\geqslant 0.
$$ 
Each $P_t$  is a Markovian kernel on $E$,
more precisely, if we set
\begin{equation} \label{2.7}
\lambda_o:= ||N1||_{\infty}\ \mbox{ and } \ 
N':= \frac{1}{\lambda_o} N+ (1- \frac{\lambda}{\lambda_o}) I,
\end{equation}
then 
$P_t f=\mathrm{e}^{-t\lambda_o}\sum_{k\geqslant 0} \frac{(\lambda_ot)^k}{k!} N'^kf $, 
where $N'^k:=\underbrace{N' \circ  \ldots  \circ N'}_{k \mbox{ times}}$.
With the above notations we have $\widetilde{N}=\lambda_o \widetilde{N'}$, that is 
  $$
 \widetilde{N}f(x)= \lambda_o \int_E [f(y)- f(x)] N'_x (\mathrm{d}y) \quad\ \forall f\in bp\cb(E)  \quad \forall x\in E.
$$

\noindent
\begin{minipage}[t]{1mm}
{\begin{equation} \label{2.2}\end{equation} }
\end{minipage}

\vspace{-19mm}
 
\hspace*{4mm} 
The operator $\widetilde{N}$ is   the generator of a (continuous time) jump Markov process $(X_t)_{t\geqslant 0}$. 
More precisely,  if $\nu$ is a probability on $E$, we consider the time-homogeneous Markov chain 
$(Y(k))_{k\geqslant 0}$ in $E$, with initial distribution $\nu$ and transition function 
$N'$, that is $\mathbb{P}^{\nu}\circ Y^{-1}(0)=\nu$ and 
$$
\mathbb{E}^{\nu} [ f(Y(k+1)) |Y(0),\ldots,Y(k)]= N'f(Y(k)) \quad\forall \ k\geqslant 0 \quad \forall f \in bp\cb(E).
$$
Let further $(V_t)_{t\geqslant 1}$ be a Poisson process with parameter 
$\lambda_o$ (i.e., $\mathbb{P}^{\nu}(V_t=k)=e^{- t\lambda_o}\frac{(\lambda_ot)^k}{k!}$, for all $k\in\mathbb{N}$ 
and $t\geqslant 0$) which is independent from $(Y(k))_{k\geqslant 0}$. 
Then the process $(X_t)_{t\geqslant 0}$ defined as
$$
X_t:=Y(V_t), \ t\geqslant 0, 
$$
is a c\`adl\`ag Markov process with state space $E$, transition function  $(P_t)_{t\geqslant 0}$ and initial distribution $\nu$.\\

\noindent
{\bf 3. The case of (discontinuous) fragmentation kernels for avalanches.}
Consider  again a  {\it fragmentation kernel} $F: (0, 1]^2\longrightarrow \mathbb{R}_+$.
The following assumption is suggested by the so called {\it  rupture properties}, 
emphasized in the deterministic  modeling of the snow avalanches (cf. the papers mentioned in the Introduction).\\

$(H_1)\quad$ There exists a function $\Phi: (0,\infty) \longrightarrow (0,\infty)$ such that
$$
F(x,y)=\Phi \left( \frac{x}{y} \right) \mbox{ for all } x, y > 0.
$$

Since the fragmentation kernel $F$ is assumed to be a symmetric function, we have
$$
\Phi(z)=\Phi \left(\frac{1}{z}\right)  \mbox{ for all } z>0.
$$

\noindent
{\bf Example of  a fragmentation kernel  satisfying $(H_1)$.}  
Fix a "ratio" $r$, $0<r<1$, and consider the fragmentation kernel $F^r:[0,1]^2\longrightarrow  {\R}_+$, defined as 
$$
F^r(x,y):=
\left\{
\begin{array}{l}
\frac{1}{2}(\delta_r (\frac{x}{y} )+ \delta_{1/r} (\frac{x}{y} ) )\mbox{, if }\; x,y>0,\\[2mm]

0\;\;\;\;\;\;\;\;\;\;\;\;\;\;\;\;\;\;\;\;\;\,\,\;\;\;\; \mbox{, if } \;\;x y=0.
\end{array}
\right. 
$$
We have
$F^r(x,y)=\Phi^{r} (\frac{x}{y})$  for all  $x, y >0,$
where $\Phi^{r}:(0,\infty)\longrightarrow (0,\infty)$ is defined as
$$
\Phi^{r}(z):= \frac{1}{2}(\delta_{r} (z)+ \delta_{1/{r}}(z)), \quad z>0.
$$
Clearly, the function $\Phi^{r}$ is  not continuous.
Let $(\Phi_n^r)_{n}$ be a sequence of continuous functions  
such that  $(\Phi_n^r \!\! \cdot \! \mbox{d}x )_{n}$ is a sequence of probabilities on $(0,\infty)$,
converging weakly to $\delta_r (\mbox{d}x)$.
Let 
$$
\bar{F}_n^r(x,y):=\Phi^r_n \left(\frac{x}{y} \right), \;\; x, y>0.
$$
The function  $\bar{F}_n^r$ is not  symmetric, therefore we consider its symmetrization $F_n^r$, 
$$
F_n^r (x,y):= \frac{1}{2}(\bar{F}_n^r(x,y)+\bar{F}_n^r(y,x)), \;\; x, y>0, 
$$
and let  $N^{F_n^r}$ be the corresponding kernel given by (\ref{ec2.3}). We have by (\ref{2.3})
$$
N^{F_n^r}_x(\mathrm{d}z) =  \frac{z}{x} F_n^r(x-z,z)\ind{(0,x)}(z)\mathrm{d}z, \  x>0.
$$
From the above considerations one can see that for each $x>0$ 
the sequence of measures
$(N^{F_n^r}_x)_n$ converges weakly to $\frac{(1-\beta)^2}{2}[ \beta x\delta_{\beta x}+ (1-\beta)x\delta_{(1-\beta) x}]$,
where $\beta:=\frac{r}{1+r}$.

Since  the sequence  of probabilities $(\Phi_n^{1/r}\!\! \cdot  \mbox{d}x )_{n}$ is approximating
 $\delta_{1/r}  ({\mbox d} x)$ on $(0,\infty)$, it follows  that
$(N^{F_n^{1/r}}_x)_n$ converges weakly to $\frac{\beta^2 }{2}[\beta x \delta_{\beta x}+ (1-\beta)x \delta_{(1-\beta)x}]$.
We conclude that the kernel $N^{F^r}$ associated with $F^r$ is given by the following linear combination of Dirac measures:
 \begin{equation} \label{2.9}   
 N^{F^r}_{x} :=  \lambda_o ( \beta x \delta_{\beta x} + (1-\beta)x \delta_{(1-\beta)x}),
\end{equation} 
where $\lambda_o:= \frac{ \beta^2 + (1-\beta)^2 }{4}$. 
In this case the kernel $N^{F^r}$  is no more Markovian and has no density with respect to the Lebesgue measure.\\

Further on in this paper we take
$$
E:=[0,1].
$$ 

We fix now a sequence of  thresholds for the fragmentation dimensions   $(d_n)_{n\geqslant 1}\subset(0,1)$ strictly decreasing to zero.
For each $n\geqslant 1$ let
$$
E_n:=[d_n,1]\textrm{ and } E'_n:=[d_{n+1},d_n), \;
E'_0 := E_1.
$$ 
We assume that $d_1<\beta$ and $d_{n+1}/d_{n}< \beta$ for all  $n\geqslant 1$.

Let $n\geqslant 1$ be fixed. 
Then  $E_n=\bigcup_{k=1}^n E'_{k-1}$.
The kernel  $N^{F^r}$ from Example  3, given by (\ref{2.9}), is used 
to define  the kernel $N_n^r$  on $E_n$ as
$$
N_n^r f= \sum_{k=1}^n  \ind{E'_{k-1}} N^{F^r} (f\ind{E'_{k-1}} ) \quad \forall f\in bp \cb(E_n).
$$
Observe that, using (\ref{2.9}), 
 \begin{equation} \label{2.10}   
\mbox{ the measure $(N^{r}_n)_x$ is carried by $[d_{k+1}, x]$ for every $x\in E'_k$,} 
\end{equation}
and consider the  corresponding Markovian kernel $N_n'^{r}$, defined as in (\ref{2.7}),
$$
N'^{r}_n f   = \frac{1}{\lambda_o} N^{r}_n f + \left(1- \frac{N^{r}_n 1 }{\lambda_o}\right) f \quad \forall f\in bp \cb(E_n).
$$

Following the procedure from Example 2, we may associate a first order integral operator
$$
\cf_n^r f (x) := \widetilde{N^r_n}f(x) = \int_{E_n}  [f(y)- f(x)]  N^r_n(\mathrm{d}y)  \quad \forall f\in bp\cb(E_n)  \quad\forall x\in E_n.
$$ 
By (\ref{2.2}) the operator $\cf^r_n= \lambda_o (N'^r_n - I)$ is  the generator of a (continuous time) jump Markov process 
$X^n ={(X^n_t)}_{t\geqslant 0}$, 
defined as
\begin{equation} \label{avproc}  
X_t^n:=Y^n(V_t),\quad t\geqslant 0.
\end{equation}
Its transition function is 
$P^n_t:=e^{\cf_n^rt}=e^{-\lambda_o t}\sum_{k{\geqslant}0}\frac{(\lambda_o t)^k}{k!}(N'^r_n)^k$
for all  $t\geqslant 0$ and
let $\cu^{n}= {(U^{n}_\alpha)}_{\alpha >0}$  be the resolvent  associated to the process $X^n$, i.e., 
$U^{n}_\alpha=\int_0^\infty \! e^{-\alpha t} P_t^n\,  \d t,$ $\alpha>0$. 
Recall that $(Y^n(k))_{k\geqslant 0}$ is the time-homogeneous Markov chain with transition function 
$N'^r_n$ and  $(V_t)_{t\geqslant 0}$ is a Poisson process with parameter $\lambda_o$.
It is precisely the random walk described in Introduction.\\

\begin{rem}   \label{rem2.1} 

$(i)$ Consider the integral operator $\cf^r$ defined as
$$
\cf^r f (x) := \widetilde{N^{F^r}}f(x) = \int_{E}  [f(y)- f(x)]  N^{F^r} (\mathrm{d}y)  \quad f\in bp\cb(E) \quad \; x\in E,
$$ 
that is,  we take the kernel $N^{F^r}$ instead of $N^r_n$. 
Then by the procedure described above there exists a  jump Markov process  $X =(X_t)_{t\geqslant 0}$,  
with state space $E$, having $\cf^r$ as generator, $X_t:=Y(V_t),$ $t\geqslant 0$, 
where  $(Y(k))_{k\geqslant 0}$ is the time-homogeneous Markov chain with transition function  $N'^r$,
where the Markovian kernel $N'^r$ is obtained from $N^r$  as in  $(\ref{2.7}).$  
In the next section we show that the process $X$ is related to  a stochastic  equation of fragmentation, 
associated to  $F^r,$ the discontinuous fragmentation kernel for avalanches. 

$(ii)$ 
As in \cite{BeDeLu14}, Proposition 5.2, we are able to solve the martingale problem associated to the bounded operator $\cf^r_n$ (resp. $\cf^r$):
for every $f \in bp\cb(E_n)$   (resp. $f \in bp\cb(E)$)  and each  probability $\nu$ on $E_n$ (resp. $f \in bp\cb(E)$),  the process
$$
f(X^n_t)-   \int_0^t  \cf^r_n f  (X^n_s) \d s,    \quad (\mbox{resp. }   f(X_t)-   \int_0^t  \cf^r  f  (X_s) \d s), \,  \, t{\geqslant}0,
$$
is a martingale under $\Pp^\nu=\int \Pp^x \nu(\d x)$, with respect to the filtration of $X^n$ (resp. of $X$).
 In the Section 4 below $X^n$ (defined by  (\ref{avproc})),  
 will become the base process of a branching Markov process on the finite configurations of $E_n$ (see Theorem \ref{thm4.1} below). 
\end{rem}

For every $x\in[0, 1]$  let 
$$
E_{\beta, x} := \{ \, \beta^i(1-\beta)^j x : \; i, j\in \N \}\cup \{0\}
$$
and for $n\geqslant 1$ let
$$
E_{\beta, x, n}:= E_{\beta, x} \cap E_n. 
$$

\begin{prop}\label{prop2.1}   
 The following assertions hold for the conservative Markov process $X^n$ with state space $E_n$, $n{\geqslant}1$.
 
$(i)$   If $t\geqslant s$ then a.s. $X^n_t \leqslant  X^n_s$. 

$(ii)$ The sets $[d_n, x]$ and $[d_n, x)$ are absorbing subsets of $E_n$ for every $x\in E_n$ with respect to the resolvent $\cu^{n}$ of $X^{n}$. In particular,  $E'_{n-1}$ is an absorbing subset of $E_{n}$. 

$(iii)$ If $x\in E_n$ then $\Pp^x$-a.s. $X^n_t\in E_{\beta, x, n}$  for all $t\geqslant 0$.  

 $(iv)$ 
If $(P_t^n)_{t{\geqslant}0}$  is  the transition function of $X^n$,  then
$$
P^{n+1}_{t,x}=P^{n}_{t,x} \, \textrm{ for all  } t{\geqslant}0  \textrm{ and } x\in E_n.
$$
 \end{prop}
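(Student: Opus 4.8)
The plan is to reduce all four assertions to one pathwise property of the random-walk representation $X^n_t=Y^n(V_t)$: each jump of the embedded chain $Y^n$ can only move to a point no larger than the current one. Indeed, for $x\in E'_{k-1}$ the measure $(N^{r}_n)_x$ is the restriction to $E'_{k-1}$ of $N^{F^r}_x=\lambda_o(\beta x\,\delta_{\beta x}+(1-\beta)x\,\delta_{(1-\beta)x})$, and since $0<\beta<\frac12<1-\beta<1$ (because $r<1$) both atoms $\beta x$ and $(1-\beta)x$ lie in $[d_n,x)$; hence the transition kernel $N'^{r}_n=\frac1{\lambda_o}N^{r}_n+(1-\frac{N^{r}_n 1}{\lambda_o})I$ charges only $\{x,\beta x,(1-\beta)x\}\cap E_n\subset[d_n,x]$. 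Therefore $Y^n(k+1)\leqslant Y^n(k)$ a.s. for every $k$, and as $t\mapsto V_t$ is non-decreasing and independent of $Y^n$, the trajectory $X^n_t=Y^n(V_t)$ is a.s. non-increasing in $t$, which is $(i)$.

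For $(ii)$ I use that, $X^n$ being conservative, a Borel set $A\subset E_n$ is absorbing with respect to $\cu^{n}$ exactly when $\Pp^y(X^n_t\in A\text{ for all }t\geqslant 0)=1$ for every $y\in A$ (equivalently $U^n_\alpha\ind{E_n\setminus A}=0$ on $A$). Fix $x\in E_n$. If $y\in[d_n,x]$ then $(i)$ gives $d_n\leqslant X^n_t\leqslant y\leqslant x$ for all $t$, $\Pp^y$-a.s., so $[d_n,x]$ is absorbing; the same bound with $y<x$ shows $[d_n,x)$ is absorbing. Choosing $x:=d_{n-1}\in E_n$ turns $[d_n,x)$ into $E'_{n-1}=[d_n,d_{n-1})$, which is therefore absorbing in $E_n$.

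For $(iii)$ I argue by induction on the jumps of $Y^n$. Since $Y^n(0)=x=\beta^0(1-\beta)^0x\in E_{\beta,x}$ and, by the description of $N'^{r}_n$, a transition multiplies the present position by $1$, $\beta$ or $1-\beta$, if $Y^n(k)=\beta^i(1-\beta)^jx$ then $Y^n(k+1)\in\{\beta^i(1-\beta)^jx,\ \beta^{i+1}(1-\beta)^jx,\ \beta^i(1-\beta)^{j+1}x\}\subset E_{\beta,x}$. Thus every value of $Y^n$ lies in $E_{\beta,x}$, and since it also lies in $E_n$ it lies in $E_{\beta,x}\cap E_n=E_{\beta,x,n}$; passing to $X^n_t=Y^n(V_t)$ gives $(iii)$.

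Finally $(iv)$ hinges on the fact that $N^{r}_n$ and $N^{r}_{n+1}$ agree on $E_n$: if $x\in E'_{j}$ with $0\leqslant j\leqslant n-1$, then in both defining sums only the term carrying $\ind{E'_j}$ survives at $x$, so $(N^{r}_n)_x=N^{F^r}(\,\cdot\,\ind{E'_j})(x)=(N^{r}_{n+1})_x$, a measure carried by $E'_j\subset E_n$. Crucially the level-set restriction built into both kernels suppresses any jump leaving $E'_j$; in particular a jump of $X^{n+1}$ into the newly available level $E'_n=[d_{n+1},d_n)$ is still forbidden when the departure point lies in $E'_{n-1}$, since the admissible target set is the level $E'_{n-1}$ of the departure point, not $E'_n$. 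Hence, started at $x\in E'_j$, both chains $Y^n$ and $Y^{n+1}$ remain in $E'_j$ and have identical one-step laws there (the normalizing constant $\lambda_o$ is the same for all $n$). Driving both by the same Poisson clock $(V_t)$ yields $P^{n+1}_{t,x}=P^{n}_{t,x}$ for all $t\geqslant 0$ and $x\in E_n$. The step needing most care is exactly this bookkeeping in $(iv)$: one must check that the restriction in $N^{r}_m$ is to the level set of the \emph{departure} point, so that enlarging the state space from $E_n$ to $E_{n+1}$ creates no new transitions above $d_n$, and that both chains are normalized by the common constant $\lambda_o$ rather than by $n$-dependent total masses, so that the two semigroups coincide outright instead of being time-changes of one another.
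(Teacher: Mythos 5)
Your proof is correct, and it rests on exactly the same structural facts as the paper's: the level-set restriction built into $N^r_n$ (recorded as (\ref{2.10}) in the paper), the location of the atoms of $N^{F^r}_x$ at $\beta x$ and $(1-\beta)x$, and the fact that all the kernels $N'^{r}_n$ are normalized by the single constant $\lambda_o$. The difference is one of execution: the paper argues analytically with powers of the kernel --- for $(i)$, $f=0$ on $[d_n,x]$ forces $(N'^{r}_n)^k f=0$ on $[d_n,x]$, hence $P^n_tf=0$ there, plus the Markov property to pass to $s>0$; for $(iii)$, $(N'^{r}_n)^k_x$ is carried by $E_{\beta,x,n}$ by induction; for $(iv)$, $(N'^{r}_{n+1})^kf=(N'^{r}_n)^k(f\ind{E_n})$ on $E_n$ by induction --- whereas you run the same inductions pathwise through the representation $X^n_t=Y^n(V_t)$, getting monotonicity of whole trajectories at once in $(i)$ and a coupling of $Y^n$ and $Y^{n+1}$ under a common Poisson clock in $(iv)$. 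The two routes are interchangeable here; yours is a shade more probabilistic and dispenses with the separate Markov-property step in $(i)$, while the paper's stays at the level of one-dimensional distributions and kernel identities.

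Three small repairs. First, in $(i)$ the claim that ``both atoms $\beta x$ and $(1-\beta)x$ lie in $[d_n,x)$'' is false in general: for $x$ in the lowest level $E'_{n-1}$ (e.g.\ $x=d_n$) the atoms can fall below $d_n$. What confines $(N'^{r}_n)_x$ to $[d_n,x]$ is not the position of the atoms but the indicator $\ind{E'_{k-1}}$ inside the definition of $N^r_n$, which kills any atom leaving the level of $x$ (the lost mass going into the identity part of $N'^{r}_n$); your stated conclusion, with the intersection $\cap\,E_n$, is correct for exactly this reason, so the slip is harmless but should be phrased accordingly. Second, in $(ii)$ you take as the characterization of absorbing the property that the process never leaves the set (equivalently $U^n_\alpha\ind{E_n\setminus A}=0$ on $A$); the equivalence of this with the potential-theoretic notion used in the statement is precisely what the paper imports as (A.1.2) from \cite{BeDeLu14}, and in the present pure-jump setting it does hold (any visit to $E_n\setminus A$ lasts an exponential holding time, hence is charged by $U^n_\alpha\ind{E_n\setminus A}$), but it should be justified or cited rather than asserted as the definition. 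Third, your choice $x:=d_{n-1}$ identifying $E'_{n-1}=[d_n,d_{n-1})$ only makes sense for $n\geqslant 2$; for $n=1$ one has $E'_0=E_1$, the full state space, which is trivially absorbing.
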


\begin{proof}

$(i)$ Assume that $s=0$ and let $x\in E_n$. 
Then  $X^n_0=x$ $\mathbb{P}^{x}$--a.s., 
so, we have to prove that 
$\mathbb{P}^{x}(X^n_t\in [d_n, x])=1,$
or equivalently,  that $P^n_t (1_{(x,1]})(x)=0$ for all  $t$.
Observe first that if $f=0$ on $[d_n, x]$ then by  (\ref{2.10}), 
$N'^r_n f=0$ on $[0,x]$ and by induction we get $(N'^r_n)^k f=0$ on $[d_n,x]$ for all $k \geqslant 0$. 
Therefore  $P^n_t f= e^{-\lambda_o t}\sum_{k{\geqslant}0}\frac{(\lambda_o t)^k}{k!}(N'^r_n)^k  f$ also vanishes on $[d_n,x]$
for all ${t{\geqslant}0}$. 
The case $s>0$ follows using the Markov property of $X^n$.\\

$(ii)$ We argue as in the proof of Proposition 3.1 from \cite{BeDeLu14}.
By the above considerations we have $\Pp^x$--a.s. $X^n_t\leqslant x$ for all $t\geqslant 0$
and therefore the entry time $D_{(x,1]}$ of the  set $(x,1]$ is infinite. 
Indeed, we have first
$D_{(y,1]}= \infty$  $\Pp^y$--a.s. for all $y\in E_n$ and therefore, 
if $d_n\leqslant y<x$,  then $[D_{(y,1]}= \infty] \subset [D_{(x,1]}= \infty]$,
$\Pp^y([D_{(x,1]})= \infty])=1$. 
By (A.1.2) from  \cite{BeDeLu14} we conclude that $[d_n, x]$ is absorbing.
The set $[0,x)$ is also absorbing as a  union of a sequence of absorbing sets,
$[d_n, x)=\bigcup_k [d_n, d_n+ (x- d_n- \frac{1}{k})^+]$.\\

$(iii)$ It follows by induction from  (\ref{2.9}) that the probability measure $(N'^r_n)^{k}_x$ is carried by 
$E_{\beta, x,n}$ for every $x\in E_n$ and $k\geqslant 0$.
Hence $\Pp^x (X^n_t \in E_{\beta, x, n})=P^n_t(\mathbbm{1}_{ E_{\beta, x, n}})(x)=1$.\\

$(iv)$ It is sufficient to show  that for every $f\in \cb(E_{n+1})$ and $k\geqslant 1$ we have on $E_n$
$(N'^r_{n+1})^k f=(N'^r_{n})^k  (f \mathbbm{1}_{E_n})$.
Indeed, one can see that  
$N^r_{n+1} f= N^r_{n+1} (f \mathbbm{1}_{E_n})=N^r_{n} (f \mathbbm{1}_{E_n})$  on $E_n$
and therefore, by induction, 
$(N'^r_{n+1})^k f= (N'^r_{n+1})^k (f \mathbbm{1}_{E_n})=(N'^r_{n})^k  (f \mathbbm{1}_{E_n})$ on $E_n$.
\end{proof}

If  $X^n$, $n{\geqslant}1$, is the Markov process with state space $E_n$,  given by (\ref{avproc}), 
and having  $(P^n_t)_{t\geqslant 0}$ as transition function,  constructed from the discontinuous fragmentation kernel $F^r$,
then by Proposition \ref{prop2.1},  assertions $(ii)$ and $(iii)$, 
conditions $(H_2)$ and $(H_3)$ from \cite{BeDeLu14}, Section 3, are fulfilled by  the processes $X^n$.

\section{The corresponding stochastic equation of fragmentation} 

To emphasize the stochastic  equation of fragmentation which is related to our stochastic model for the avalanches, 
we rather  consider  for simplicity  the kernel $N^{F^r}$ on $E$  instead of $N^{r}_n$, $n\geqslant 1$, 
and the associated pure jump process $X=(X_t)_{t\geqslant 0}$ with state space $E$,
given in assertion $(i)$ of Remark \ref{rem2.1}.\\[2mm]



We state now the  {\it stochastic   equation of fragmentation for avalanches}:

\begin{equation}\label{stoch-frag-eq}  
X_t= 
X_0 
-  2\lambda_o \beta (1-\beta)  \int_0^t X^2_{\alpha-}  \d \alpha
\end{equation}
$$
-\int_0^t\int_0^1 \int_0^\infty  \left( (1-\beta) X_{\alpha-} \ind{[\frac{s}{\beta\lambda_o} < X_{\alpha-}\leqslant 1]}
+\beta X_{\alpha-}  \ind{[\frac{s}{\lambda_o} < X_{\alpha-}\leqslant  \frac{s}{\beta\lambda_o}]} \right) 
p(\d\alpha, \d u, \d s)
$$
$$
+ \int_0^t  \int_0^\infty  \left( (1-\beta) X_{\alpha-} \ind{[\frac{s}{\beta\lambda_o} < X_{\alpha-}\leqslant 1]}
+\beta X_{\alpha-}  \ind{[\frac{s}{\lambda_o} < X_{\alpha-}\leqslant  \frac{s}{\beta\lambda_o}]}   \right) 
\d\alpha \d s,
$$
where  $p(\d\alpha, \d u, \d s)$ is a Poisson measure with intensity $q:=\d \alpha \d u\d s$.

\vspace{3mm}

The next theorem gives the claimed relation between the equation $(\ref{stoch-frag-eq})$ and the process $X$.

\begin{thm}  \label{thm3.1}   
The  stochastic  equation of fragmentation for avalanches  $(\ref{stoch-frag-eq})$, 
with the initial distribution $\delta_x$, $x\in E$,  has a weak
solution which is equal in distribution with  $(X,\Pp^x)$.
\end{thm}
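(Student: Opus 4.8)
The plan is to read equation $(\ref{stoch-frag-eq})$ as the semimartingale decomposition of $X$ and then to realize its martingale part as an integral against a compensated Poisson measure, following the representation technique of \cite{Jac79}. First I would invoke assertion $(ii)$ of Remark \ref{rem2.1}: since $\cf^r$ is a bounded operator, $(X,\Pp^x)$ solves the martingale problem for $\cf^r$, so for every bounded $f$ the process $f(X_t)-\int_0^t\cf^r f(X_\alpha)\,\d\alpha$ is a $\Pp^x$-martingale. Applying this to the identity $f(x)=x$, which lies in $bp\cb(E)$ on $E=[0,1]$, and computing from $(\ref{2.9})$
\[
\cf^r(\mathrm{id})(x)=\lambda_o\beta x(\beta x-x)+\lambda_o(1-\beta)x((1-\beta)x-x)=-2\lambda_o\beta(1-\beta)x^2 ,
\]
one gets that
\[
M_t:=X_t-X_0+2\lambda_o\beta(1-\beta)\int_0^t X_{\alpha-}^2\,\d\alpha
\]
is a martingale null at $0$. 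Since $X$ is a pure jump process, $M$ is exactly the compensated sum of its jumps, $M_t=\sum_{\alpha\leqslant t}(X_\alpha-X_{\alpha-})-\int_0^t\int_E (y-X_{\alpha-})N^{F^r}_{X_{\alpha-}}(\d y)\,\d\alpha$. This identifies the first integral of $(\ref{stoch-frag-eq})$ as the drift and reduces the theorem to representing $M$ by the Poisson integral.

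Next I would match the jumps encoded by the integrand to the Lévy kernel $N^{F^r}$. By $(\ref{2.9})$ the process started at $x$ performs the increments $-(1-\beta)x$ and $-\beta x$ at rates $\lambda_o\beta x$ and $\lambda_o(1-\beta)x$ respectively. The key observation is that, for fixed $x$, the deterministic integrand
\[
g(x,s):=(1-\beta)x\,\ind{[\frac{s}{\beta\lambda_o}<x\leqslant1]}+\beta x\,\ind{[\frac{s}{\lambda_o}<x\leqslant\frac{s}{\beta\lambda_o}]}
\]
partitions the half-line $\{s>0\}$ into $(0,\beta\lambda_o x)$, of length $\beta\lambda_o x$, where $g=(1-\beta)x$, and $[\beta\lambda_o x,\lambda_o x)$, of length $(1-\beta)\lambda_o x$, where $g=\beta x$, with $g=0$ for $s\geqslant\lambda_o x$. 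Hence the image of $\ind{(0,\lambda_o x)}(s)\,\d s$ under $s\mapsto-g(x,s)$ is precisely $\lambda_o\beta x\,\delta_{-(1-\beta)x}+\lambda_o(1-\beta)x\,\delta_{-\beta x}$, i.e. $N^{F^r}_x$ written in increment coordinates. The variable $u\in(0,1)$ does not enter $g$; since $\int_0^1\d u=1$ it only makes the intensity $q=\d\alpha\,\d u\,\d s$ project correctly, so $\int_0^t\!\int_0^1\!\int_0^\infty g\,q=\int_0^t\!\int_0^\infty g\,\d\alpha\,\d s$, and the direct computation $\int_0^\infty g(x,s)\,\d s=2\lambda_o\beta(1-\beta)x^2$ shows that this exactly compensates the drift above.

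With this matching in hand I would apply the representation technique of \cite{Jac79}: on a suitably enlarged probability space one constructs a Poisson random measure $p$ with intensity $q=\d\alpha\,\d u\,\d s$ such that the jump measure of $X$ is the image of $p$ under $(\alpha,u,s)\mapsto(\alpha,-g(X_{\alpha-},s))$, whence $M_t=-\int_0^t\!\int_0^1\!\int_0^\infty g(X_{\alpha-},s)\,(p-q)(\d\alpha,\d u,\d s)$. Substituting into $X_t=X_0-2\lambda_o\beta(1-\beta)\int_0^t X_{\alpha-}^2\,\d\alpha+M_t$ and expanding $p-q$ gives exactly $(\ref{stoch-frag-eq})$; as $p$ is built on an extension carrying the law of $X$, the resulting pair is a weak solution equal in distribution to $(X,\Pp^x)$.

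The main obstacle is this last representation step: one must produce the Poisson measure $p$ and verify that the jumps of $X$ are faithfully recovered as $-g(X_{\alpha-},s)$ at its atoms, that is, that the prescribed deterministic partition of the $s$-line reproduces the random measure $N^{F^r}_{X_{\alpha-}}(\d y)\,\d\alpha$ as the predictable compensator of the jump measure. This is precisely where the machinery of \cite{Jac79} on the representation of integer-valued random measures and of martingale-problem solutions by Poisson-driven stochastic integral equations is required; the remaining points — boundedness of $\cf^r$, the identity $\cf^r(\mathrm{id})(x)=-2\lambda_o\beta(1-\beta)x^2$, and the bookkeeping of the compensator — are routine.
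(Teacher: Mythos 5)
Your proposal is correct and follows essentially the same route as the paper: the explicit partition of the $s$-axis that matches the kernel $N^{F^r}$ (your $g$ is, up to sign and the $\infty$-convention, exactly the paper's $w_t(\cdot,u,s)=\tau(X_{t-},u,s)$), the martingale-problem solution from Remark \ref{rem2.1}$(ii)$, and Jacod's representation machinery (Theorems (14.53) and (14.80) of \cite{Jac79}) to produce the Poisson measure on an extension and conclude equality in distribution. The only presentational difference is that you extract the drift by applying the martingale problem to the identity function, whereas the paper writes $\cf^r$ directly in Jacod's canonical form $\mathcal{K}$ with drift $b(x)=2\lambda_o\beta(\beta-1)x^2$ and then cites Theorem (14.80); the substance is identical.
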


\begin{proof}
Define the bounded kernel $K$ on $\R$ by

\begin{equation}\label{kernel-K}
K_x= \left\{
\begin{array}{l}
\lambda_o x ( \beta  \delta_{(\beta-1) x} + (1-\beta) \delta_{-\beta x}) \, \mbox{ if  }\,  x\in [0,1], \\[3mm]

0 \, \mbox{ else.}
\end{array}\nonumber
\right.
\end{equation}
Using (\ref{2.9})  one can see that
\begin{equation} \label{3.1a}
\cf^r f(x) = \ds\int_\R [ f(x+y)-f(x) ] K_x( \d y) \mbox{ for all } f\in bp\cb(\R) \mbox{ and } x\in \R,
\end{equation}
where on left hand side the kernel $N^{F^r}$ occurring in the definition of $\cf^r$ is extended from $E$ to $\R$  
with zero on the complement of $E$.
In particular, we have $Kf(x)=\int f(y-x) N^{F^r}_x(\d y)$ for  $x\in[0,1]$.
By a strait-forward  procedure (see e.g., Lemma (2.2) of Chapter V from \cite{BlGe68}), applied to the measure $N^{F^r}_x$ which is carried by $(0,x)$,  we get for all $x, u, s\in \R$:
\begin{equation}\label{K-tau} 
Kf(x)=\int_\R \int_\R \d u \, \d s f(\tau(x, u,s)),
\end{equation}
where
\begin{equation}\label{tau}
\tau(x, u,s)= \left\{
\begin{array}{l}
\inf\{ v>0 : N^{F^r}_x((0, v])>s \} - x \, \mbox{ if  }\,  0\leqslant s, 0\leqslant u\leqslant 1, x\in [0,1], \\[3mm]

0 \, \mbox{ else,}
\end{array}\nonumber
\right.
\end{equation}
with the convention $f(\infty)=0$.

Let further  $(\mathcal{K}, C_b^1(\R))$ be the operator defined as:
$$
\mathcal{K}f(x)=b(x)f'(x)+\int_\R [f(x+y)-f(x)-y\ind{\{|y|\leq 1\}}f'(x)] K_x(\d y),  \; x\in \R;
$$
cf. \cite{Jac79},  page 434.
Taking   $b(x)= \int_\R  y K_x(\d y)=2 \lambda_o\beta(\beta-1) x^2$ 
if $x\in [0,1]$ and $b(x)=0$ elsewhere, we clearly have $\cf^r= \ck$.

We consider  the following stochastic differential equation, 
applying a method from \cite{Jac79}, page 479:  
\begin{equation}\label{sde}  
\d X_t= 2 \lambda_o\beta(\beta-1) X_{t-}^2 \mathrm{d}t +  w_t\d p_t - w_t \ind{[|w_t| \leqslant 1]} \d q_t,
\end{equation}
where
 $ w_t$  is a process that depends on $X_t$, 
such that 
$Kf(X_{t-}) =\int_{\R} \! \int_{\R} \d u\d s f (w_t(\cdot,u,s)) \ind{[w_t(\cdot,u, s)\neq 0]}$.
Note that in general, the existence of the process $w_t$ follows by Theorem  (14.53)  from \cite{Jac79}, applied  to  the measure $\d u\d s$.
However, the main observation here is that  using $(\ref{K-tau})$,
we can take $w_t(\cdot, u, s)= \tau (X_{t-}, u, s)$, or explicitly 
$$
w_t(\cdot, u, s)=
\ind{[0 \leqslant u\leqslant 1]}(u) \ind{[0\leqslant X_{t-} \leqslant 1]} 
\left( (\beta-1) X_{t-} \ind{[\frac{s}{\beta\lambda_o} < X_{t-}]}
-\beta X_{t-}  \ind{[\frac{s}{\lambda_o} < X_{t-}\leqslant  \frac{s}{\beta\lambda_o}]}
+\infty \ind{[X_{t-}\leqslant  \frac{s}{\lambda_o}]}
\right).
$$
From the above considerations, the stochastic differential equation 
$(\ref{sde})$ may be rewritten in the form  (\ref{stoch-frag-eq}).

By  assertion $(ii)$ of Remark \ref{rem2.1} 
we know that the martingale problem associated to the bounded operator $\cf^r$ has a solution. 
Consequently, Theorem (14.80) from \cite{Jac79},  page 48, implies the existence of the solution of the stochastic differential equation $(\ref{sde})$.
\end{proof}

\section{Branching and fragmentation processes of an avalanche}  

For a Borel subset $E$ of $[0,1]$ define 
the {\it space of finite configurations of E} which is
the following set  $\widehat{E}$ of finite positive measures on $E$:
$$
\widehat{E}:=\left\{\sum_{k{\leqslant}k_0}\delta_{x_k}:k_{ 0}\in\N^*, x_k\in E\textrm{ for all } 1{\leqslant}k{\leqslant}k_0\right\}\cup\{{\bf 0}\},
$$
where ${\bf 0}$ denotes the zero measure. 
We identify $\widehat{E}$ with the union of all symmetric $m$-th powers $E^{(m)}$ of $E$:
$\widehat{E}= \bigcup_{m {\geqslant}0}E^{(m)},$ 
where $E^{(0)}:=\{\bf 0\}$; see, e.g.,  \cite{INW68}, \cite{BeOp11}, \cite{BeLu14}, and \cite{Si68}. 
The set $\widehat{E}$ is endowed with the topology of disjoint union of topological spaces 
and the corresponding Borel $\sigma$-algebra $\cb(\widehat{E})$. 


If $p_1, p_2$  are two finite measures on $\widehat{E}$, then
their convolution $p_1 *p_2$ is the finite measure on $\widehat{E}$
defined for every $h\in p{\mathcal B}(\widehat{E})$ by
$$
\int_{\widehat{E}} p_1*p_2(\mbox{d}\nu)h(\nu) := \int_{\widehat{E}} p_1(\mbox{d}\nu_1) \int_{\widehat{E}} p_2(\mbox{d}\nu_2)
h(\nu_1+\nu_2).
$$

If $\varphi\in p\cb(E)$, define the {\it multiplicative function }  $\widehat{\varphi}:\widehat{E}\longrightarrow \R_+$ as
\begin{equation}\nonumber
\widehat{\varphi}({\bf x})=
\left\{
\begin{array}{l}
\displaystyle\prod_{k}\varphi(x_k), \textrm{ if }{\bf x}=(x_k)_{k{\geqslant}1}\in \widehat{E},  {\bf x}\not= {\bf 0},  \\[2mm]

1\;\;\;\;\;\;\;\;\;\;\;\;,  \textrm{ if } {\bf x}= {\bf 0}.
\end{array}
\right. 
\end{equation}

Recall that a bounded kernel $N$  on $\widehat{E}$ is called {\it branching kernel }  if
 $$
N_{\mu+\nu}= N_\mu * N_\nu  \textrm{ for all  } \mu, \nu \in \widehat{E},
$$
where $N_\mu$ denotes  the measure on $\widehat{E}$ such that $\int \! g  \d N_\mu=Ng(\mu)$ for all
$g\in p{\mathcal B}(\widehat{E})$.
Note that if $N$ is a branching kernel on $\widehat{E}$  then $N_{\bf 0}=\delta_{\bf 0}\in M(\widehat{E})$.
A right (Markov) process with state
space $\widehat{E}$ is called {\it branching process}  provided that its transition function is formed by branching kernels.
The probabilistic description of a branching process is the following: if we take two independent
versions $X$ and $X'$  of the process, starting respectively from two measures 
$\mu$ and $\mu'$, then  
$X+X'$  and the process starting from 
$\mu+\mu'$ 
are equal in distribution.\\

\noindent
{\bf Branching processes on the finite configurations of $E_n$}.
For all $n\geqslant 1$ we define the Markovian kernel $B^n$ from $\widehat{E_n}$ to $E_n$ as
\begin{equation}\label{branker}
B^nh(x) := \frac{1}{a(x)}\sum_{{1}\leqslant k\leqslant n}\, \sum_{E_{\beta, x}\ni y \leq x}\!\!\! \!\!\!
\mathbbm{1}_{E'_{k-1}}\! (x) {}_{d_k}\! h(y , y ) y(x-y),\; 
h\in bp\cb(E_n),\, x\in E_n, 
\end{equation}
where for $d> d_n$ and $g \in bp{\mathcal B}([d,1]) $ we consider  the function $ {}_dg \in bp{\mathcal B}(E_n) $, 
the extension  of $g$   to $E_n$ with the value $g(d)$ on $[d_n, d)$,
$$
{}_dg(y):=g(d) \mathbbm{1}_{[d_n,d)}(y)+g(y)\mathbbm{1}_{[d,1]}(y), \; y\in E_n,
$$
and $a(x)\! :=\!\!\!\!\!\!  \displaystyle\sum_{E_{\beta, x}\ni y \leq x}  \!\!\!\!\! y(x-y )< \infty$ for all $x\in E_n$.


Observe that $\sup_{x\in E_n} B^n l_1(x)=2$, where for a function $f\in p\cb(E_n)$ we consider   the mapping  
$l_f: \widehat{E_n}\longrightarrow {{\R}}_+$   defined as
$ l_f(\mu)
:=\int f \mbox{d}\mu, \  \mu\in \widehat{E_n}.$ 
Therefore we my apply Proposition 4.1 from \cite{BeLu14}, 
to construct a transition function $(\widehat{P^n_t})_{t\geqslant 0}$ on $\widehat{E_n}$,
induced by  $(P_t^n)_{t{\geqslant}0}$, 
the transition function of the Markov process $X^n$ on $E_n$, given by (\ref{avproc}), and by the kernel $B^n$.
The existence of a branching process with state space $\widehat{E_n}$ and  transition function $(\widehat{P^n_t})_{t\geqslant 0}$ 
is given by the first assertion of the following theorem. 
It is a version of Proposition 5.1 from \cite{BeDeLu14}.

If 
$x_1, \ldots , x_k \in E$ and  ${\bf x}=\delta_{x_1}+\ldots +\delta_{x_k}\in \widehat{E}$, we put
$$
E_{\beta, {\bf x}}:= \bigcup_{j=1}^{k} E_{\beta, x_j}.
$$
If $n\geqslant 1$ then let
$$
E_{\beta, {\bf x},n}:= \bigcup_{j=1}^{k} E_{\beta, x_j, n}.
$$

\begin{thm}  \label{thm4.1}  
Let $n\geqslant 1$. Then the following assertions hold.

$(i)$ There exists a branching standard (Markov) process $\widehat{X^n}=(\widehat{X^n_t})_{t \geqslant 0}$, 
induced by $(P_t^n)_{t{\geqslant}0}$ and the kernel 
${B^n}$, with state space $\widehat{E_n}$ and having the transition function $(\widehat{P^n_t})_{t{\geqslant}0}$.

$(ii)$  For every ${\bf x}\in \widehat{E_n}$,  ${\bf y}\in \widehat{E_{\beta, {\bf x}, n}}$ , and $t\geqslant 0$ we have $\Pp^{\bf y}$--a.s. $\widehat{X^n_t}\in \widehat{E_{\beta, {\bf x}, n}}$.
\end{thm}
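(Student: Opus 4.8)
The plan is to handle the two assertions separately: (i) by fitting the present data into the abstract branching construction already available, and (ii) by lifting to the branching level the single-particle invariance of Proposition \ref{prop2.1}. For (i) I would first note that $B^n$ is Markovian ($B^n 1 = 1$ by the normalization with $a(x)$) and obeys the linear bound $\sup_{x\in E_n} B^n l_1(x) = 2 < \infty$ recorded above. The base process $X^n$ of Section 2, with transition function $(P^n_t)_{t\ge 0}$, satisfies conditions $(H_2)$ and $(H_3)$ of \cite{BeDeLu14} by Proposition \ref{prop2.1}. Hence Proposition 4.1 of \cite{BeLu14} produces a transition function $(\widehat{P^n_t})_{t\ge 0}$ on $\widehat{E_n}$ consisting of branching kernels, and Proposition 5.1 of \cite{BeDeLu14} upgrades it to a branching standard process; this step should be pure verification, the only genuine point being that the boundedness and the branching structure of the induced kernels survive in the present discontinuous-kernel setting.

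For (ii) I would first reduce to a single initial particle. By the branching property, started from ${\bf y} = \sum_j \delta_{y_j}$ the process is the independent superposition of the processes started from the $\delta_{y_j}$, and $\widehat{E_{\beta, {\bf x}, n}}$ is stable under addition of configurations; so it suffices to treat ${\bf y} = \delta_y$ with $y \in E_{\beta, x_j, n}$ for some $j$. Next I would exploit the idempotence $E_{\beta, z} \subseteq E_{\beta, w}$ whenever $z \in E_{\beta, w}$, immediate from $z = \beta^a(1-\beta)^b w$; combined with $E_{\beta, y}\subseteq E_{\beta, x_j}\subseteq E_{\beta, {\bf x}}$ this reduces the claim to showing that, started from $\delta_y$, the process $\widehat{X^n}$ stays in $\widehat{E_{\beta, y, n}}$.

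The core is then to prove that $\widehat{E_{\beta, y, n}}$ is absorbing for $\widehat{X^n}$, which I would derive from two carrier properties of the building blocks. First, by Proposition \ref{prop2.1}(iii) the base motion keeps a particle at $z \in E_{\beta, y, n}$ inside $E_{\beta, z, n} \subseteq E_{\beta, y, n}$, so each $P^n_t(z,\cdot)$ is carried by $E_{\beta, y, n}$. Second, the explicit form (\ref{branker}) shows that for $z \in E_{\beta, y, n}$ the measure $B^n_z$ is carried by $\widehat{E_{\beta, z, n}} \subseteq \widehat{E_{\beta, y, n}}$, since every offspring position entering the defining sum belongs to $E_{\beta, z}$. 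Transporting both carrier properties through the construction of $(\widehat{P^n_t})$ in Proposition 4.1 of \cite{BeLu14}, where base transport and $B^n$-branching are composed, yields the invariance of $\widehat{E_{\beta, y, n}}$; alternatively one verifies the absorbing-set criterion (A.1.2) of \cite{BeDeLu14} exactly as in the proof of Proposition \ref{prop2.1}(ii). I expect this last transfer to be the main obstacle: the carrier must be followed through the nonlinear branching construction rather than through a single linear semigroup, and one must confirm that the truncation ${}_{d_k}h$ built into $B^n$ never pushes offspring out of $E_{\beta, y, n}$.
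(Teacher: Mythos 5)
Your treatment of part (ii) is essentially the paper's own argument: the same reduction (single ancestor, plus $E_{\beta,{\bf y},n}\subseteq E_{\beta,{\bf x},n}$ for ${\bf y}$ in the invariant set), the same two carrier facts (Proposition \ref{prop2.1}$(iii)$ for the base semigroup, the explicit form (\ref{branker}) for $B^n$), and the same transport mechanism. The paper makes the transport precise exactly where you leave it as ``the main obstacle'': by Proposition 4.1 of \cite{BeLu14}, $\widehat{P^n_t}\widehat{\varphi}=\widehat{h_t(\varphi)}$, where $h_t$ is the unique solution of $h_t= e^{-t}P^n_t\varphi+\int_0^t e^{-(t-u)}P^n_{t-u}B^n\widehat{h_u}\,\dint u$, obtained as the pointwise limit of the iteration $h^{m+1}_t= e^{-t}P^n_t\varphi+\int_0^t e^{-(t-u)}P^n_{t-u}B^n\widehat{h^m_u}\,\dint u$; taking $\varphi=\mathbbm{1}_{E_{\beta,x,n}}$, induction on $m$ gives $h^m_t=1$ on $E_{\beta,x,n}$, and then monotonicity of $\varphi\mapsto h_t(\varphi)$ together with multiplicativity of $\widehat{P^n_t}$ on multiplicative functions finishes the proof. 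Your caution about the truncation ${}_{d_k}$ is well placed: the paper simply asserts $B^n\widehat{\varphi}=1$ on $E_{\beta,x,n}$ without discussing offspring repositioned at the thresholds, so an executed version of your plan would have to supply a check that the paper itself glosses over.

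The genuine gap is in part (i). You propose to obtain the standard process by direct citation of Proposition 5.1 of \cite{BeDeLu14}, and you locate the only danger in ``boundedness and the branching structure of the induced kernels.'' Those are not where the difficulty lies: boundedness ($\sup_{x\in E_n}B^n l_1(x)=2$) and the branching property of $(\widehat{P^n_t})_{t\geqslant 0}$ already come out of Proposition 4.1 of \cite{BeLu14}, which constructs the transition function. What does not come for free is standardness, i.e.\ a c\`adl\`ag strong Markov realization of $(\widehat{P^n_t})_{t\geqslant 0}$, and here the cited proposition does not apply as stated: its proof requires a function space verifying condition $(*)$ --- a space on which $(P^n_t)_{t\geqslant 0}$ is a $C_0$-semigroup of contractions and which is stable under $\varphi\mapsto B^n\widehat{\varphi}$ --- and in \cite{BeDeLu14} that space is the continuous functions, available precisely because the fragmentation kernel there is continuous. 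In the present setting $N'^r_n$ and $B^n$ are built from the purely atomic kernel $N^{F^r}$ and the indicators $\mathbbm{1}_{E'_{k-1}}$, so $P^n_t$ does not leave $\mathcal{C}(E_n)$ invariant and the citation breaks down. The actual content of the paper's proof of (i) is the construction of the substitute space $\cc_n$ of functions continuous on each $E'_k$ with finite left limits at the thresholds $d_k$, the verification that $P^n_t(\cc_n)\subseteq\cc_n$, that $\lim_{t\searrow 0}P^n_t f=f$ uniformly on each $E'_k$, and that $B^n\widehat{\varphi}\in\cc_n$ for $\varphi\in p\cb(E_n)$, $\varphi\leqslant 1$, after which the argument of Proposition 5.1 runs with $\cc_n$ in place of the continuous functions. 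Your proposal skips exactly this step, which is the one piece of (i) that is specific to the discontinuous avalanche kernel.
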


\begin{proof} 
$(i)$ We argue as in the proof of Proposition 5.1 from \cite{BeDeLu14}. 
Consider the vector space $\cc_n$ defined as
$$
\cc_n :=\{ f:[d_n,1]\longrightarrow \R\, :\, f|_{E'_k}\in\cc(E'_k)\textrm{ such that }\lim_{y\nearrow d_k}f(y)\in\R \mbox{ for all } \, k=0,\ldots,n-1\}.
$$
We claim that  $(P_t^n)_{t{\geqslant}0}$ induces  a $C_0$-semigroup of contractions on $\cc_n$. 
Indeed, we observe first that the Markovian kernel $N'^r_{n}$ becomes a bounded contraction operator on $\mathcal{C}(E_n)$ and consequently 
$P^n_t(\cc_n)\subset\cc_n$ for all $t{\geqslant}0$. 
Also, for all $f\in\mathcal{C}_n$ we have $\lim_{t\searrow 0}P^n_t f=f$ uniformly on $E'_k$ for each $k=0,\ldots,n-1.$ 
Note that the kernels $B^n$  have the following property
$$
\mbox{ if $\varphi\in p\cb(E_n),\varphi{\leqslant}1,$ then $B^n \widehat{\varphi}\in\cc_n$.}
$$ 
Now  condition $(*)$ from the proof of Proposition 5.1 from \cite{BeDeLu14} is verified by the vector space $\cc_n$. 
So,  there exists a standard process with state space $\widehat{E_n}$, 
having $(\widehat{P^n_t})_{t{\geqslant}0}$ as transition function.

$(ii)$ Let $\vf:=\mathbbm{1}_{E_{\beta, {\bf x}, n}}$ and observe that $\widehat{\vf}=\mathbbm{1}_{\widehat{E_{\beta, {\bf x},n}}}$.
Since for ${\bf y}\in\widehat{ E_{\beta, {\bf x},n} }$ we have 
$\widehat{E_{\beta, {\bf y},n}} \subset \widehat{E_{\beta, {\bf x},n}}$, 
we only have to prove that $\widehat{P^n_t}\widehat{\vf}({\bf x})=1$.
By Proposition 4.1 from \cite{BeLu14} we have 
$\widehat{P_{t}^{n}} \widehat{\varphi} =\widehat{h_t(\vf)}$,
where $h_t(\vf)=:h_t\in bp\cb(E_n)$
is the unique solution of the integral equation on $E_n$
$$
{h_t(x)= e^{-t}P_t^{n}\varphi (x)+ \int_{0}^{t} e^{-(t-u)} P^{n}_{t-u}
B^{n}\widehat{h_u} (x) \d u,\; t\geqslant  0,\; x\in E_{n}.}
$$
It is sufficient to show that 
\begin{equation} \label{2.14}
h_t(\mathbbm{1}_{E_{\beta, {x}, n}})=1 \mbox{  on } E_{\beta, {x}, n} \mbox{ for every } x\in E_n. 
\end{equation}
Indeed, if ${\bf x}\in \widehat{E_n}$, ${\bf x}=\delta_{x_1}+\ldots +\delta_{x_k}$, 
since the function $\vf \longmapsto h_t(\vf)(x)$ is increasing,  we have
$$
\widehat{P^n_t}\widehat{\vf}({\bf x})=
\prod_{j=1}^k h_t(\vf)(x_j) \geqslant
\prod_{j=1}^k h_t(\mathbbm{1}_{ E_{\beta, x_j, n}})(x_j)=1,
$$
where the last equality holds by  (\ref{2.14}). 
To prove it let $x\in E_n$ and assume further that $\vf:=\mathbbm{1}_{E_{\beta, {x},n}}$.
Recall that (cf. the proof of Proposition 4.1 from \cite{BeLu14}) $h_t$ is the point-wise limit of the sequence
$(h^m_t)_{m\in \N}$ defined inductively as follows: 
$h^0_t         = e^{-t}P^n_t \vf + \int_{0}^{t} e^{-(t-u)} P^{n}_{t-u} B^{n}\widehat{\vf} \d u$,
$h^{m+1}_t = e^{-t}P^n_t \vf + \int_{0}^{t} e^{-(t-u)} P^{n}_{t-u} B^{n}\widehat{h^m_u} \d u$, $m\geqslant 0$.
Assertion $(iii)$ of Proposition \ref{prop2.1} implies that
$P^n_t \vf= 1$ on $E_{\beta, x, n}$ for all $t\geqslant 0$ and by (\ref{branker}) we also have 
$B^n\widehat{\vf}=1$ on $E_{\beta, x,n}$. 
It follows that on $E_{\beta, x, n}$ we have $h^0_t =1$  and by induction $h^m_t =1$ for every $m\geqslant 0$.
We conclude that (\ref{2.14}) holds.
\end{proof}

\begin{rem} \label{rem3.2} 
$(i)$ The branching process constructed in Theorem \ref{thm4.1} on  $E_n$ has the following description: 
{\rm An initial particle starts at a point of $E_n$ and
moves according to the {\it base process} $X^n$,  until a random time, when
it splits randomly into two new particles, its direct descendants, placed in $E_n$. 
Each direct descendant starts from a position  dictated by the "branching kernel" 
$B^n$  and moves on according to  two independent copies of $X^n$ and so on. 
}

$(ii)$ In the above description the number of direct descendants is two since the kernel $B^n$ from $\widehat{E_n}$ to $E_n$, defined by
(\ref{branker}),  is actually carried by $E_n^{(2)}$.
Accordingly, a binary fragmentation process will be involved.

$(iii)$ In contrast with the situation from  \cite{Si68},  the starting position of a descendant may be different from the terminal position of the parent particle. 
This property is compatible with our aim to count with the branching processes the  number of  fragments of the avalanche,  having the same size. 
So, since for our model a particle represents  a size, then clearly the descendants, resulting after a splitting, 
should represent sizes which are smaller than  (in particular, different from) the parent size.
\end{rem}

\noindent
{\bf Branching processes on the space of fragmentation sizes.} 
As  in \cite{BeDeLu14}, we intend to construct a process with state space the set $S^{\downarrow}$ 
of all decreasing numerical sequences bounded above from $1$ and with limit $0$,

$$
{ S^{\downarrow}}:=\{ {\bf x}=(x_k)_{k{\geqslant}1}\subseteq [0,1]:(x_k)_{k{\geqslant}1}\textrm{ decreasing}, \lim_{k}x_k=0 \}.
$$
Recall  that a  sequence  $ {\bf x}$ from $S^{\downarrow}$ may be considered as
 "the sizes of the fragments resulting from the split of some block with unit size" 
 (cf. \cite{Ber06}, page 16). 
It is convenient to identify a sequence ${\bf x}=(x_k)_{k{\geqslant}1}$ from $S^{\downarrow}$ 
with the $\sigma$-finite measure $\mu_{{\bf x}}$  on $[0,1]$, defined as

$$ 
\mu_{{\bf x}}:=
\left\{
\begin{array}{l}
\displaystyle\sum_{k}\delta_{x_k}\;\;, \textrm{ if }{\bf x}\neq {\bf 0},\\[2mm]

\;\;\;{\bf 0}\;\;\;\;\;\; \;\, ,\textrm{ if } {\bf x}= {\bf 0},
\end{array}
\right. 
$$
where the zero constant sequence ${\bf 0}$  is identified with the zero measure,  $\mu_{\bf 0}=\bf 0$.

 Let further
$$
{ S}:=\{ {\bf x}=(x_k)_{k{\geqslant}1}\in S^{\downarrow}:  \exists\, k_0\in \mathbb{N}^* \textrm{ such that } \,
x_{k_0}>0 \mbox{ and }  x_k=0 \textrm{ for all } k>k_0\}.
$$
 
The mapping ${\bf x}\longmapsto \mu_{{\bf x}}$ 
identifies $S$ with $\bigcup_{n{\geqslant}1} \widehat{E_n}\setminus\{\bf 0\}$. 
For ${\bf x} \in S^{\downarrow}$ we write ${\bf x}=\mu_{{\bf x}}$
where it is necessary to emphasize the identification of the sequence ${\bf x}$ with the measure $\mu_{{\bf x}}$.

In order to consider branching kernels on $S^{\downarrow}$ we need to have a convolution operation between finite measures on $S^{\downarrow}$. 
We first endow $S^{\downarrow}$ with a semigroup structure: if 
$ {\bf x},  {\bf y} \in S^{\downarrow}$ then 
the sequence ${\bf x}+ {\bf y} \in S^{\downarrow}$ is by definition  the decreasing rearrangement 
of the terms of the sequences ${\bf x}$ and ${\bf y}$. 
The convolution may be now introduced as in the case of the space finite configurations:
if $p_1, p_2$  are two finite measures on $S^{\downarrow}$, then
their convolution $p_1 *p_2$ is the finite measure on $S^{\downarrow}$,
defined for every $h\in p{\mathcal B}(S^{\downarrow})$ by:
$\int_{S^{\downarrow} } p_1*p_2(\mbox{d}\nu)h(\nu) :=$ 
$ \int_{S^{\downarrow} } p_1(\mbox{d}\nu_1) \int_{S^{\downarrow} } p_2(\mbox{d}\nu_2) h(\nu_1+\nu_2).$ 
The branching kernels on $S^{\downarrow}$ and the branching process with state space 
$S^{\downarrow}$ are now defined analogously, and the probabilistic interpretation remains valid. 

Define the mapping $\alpha _n: S^{\downarrow}\longmapsto \widehat{E_n}$ as
$\alpha_n({\bf x}):=\mu_{{\bf x}}|_{E_n},\; {\bf x}=\mu_{{\bf x}}\in S^{\downarrow}.$
We have $\alpha_n({\bf 0})={\bf 0}$ and
$\alpha_n|_{\widehat{E_n}}=\textrm{Id}_{\widehat{E_n}}.$
Define also
$$
S_{\infty}:=\{({\bf x}^n)_{n{\geqslant}1}\in \prod_{n{\geqslant}1}\widehat{E_n}: \, {\bf x}^n=\alpha_n({\bf x}^m) \textrm{ for all }m>n{\geqslant}1\}.
$$
By Proposition 4.5 from \cite{BeDeLu14} we have:

\vspace{-3mm}

\noindent
\begin{minipage}[t]{1mm}
{\begin{equation} \label{3.4} \end{equation} }  
\end{minipage}

\vspace{-18.3mm}
 
\hspace*{4mm}   
The mapping $i:S^{\downarrow}\longrightarrow S_{\infty} $, defined as
$i({\bf x}):=(\alpha_n({\bf x}))_{n{\geqslant}1},\; {\bf x}\in S^{\downarrow},$
is a bijection.

\vspace{3mm}

Using assertion $(iv)$ of Proposition \ref{prop2.1}, we may apply Proposition 4.6 from  \cite{BeDeLu14} to get the following result:

\vspace{-3mm}

\noindent
\begin{minipage}[t]{1mm}
{\begin{equation} \label{3.5} \end{equation} }  
\end{minipage}

\vspace{-19.1mm}
\hspace*{4mm}
Let ${\bf x}\in S^{\downarrow}$ and ${\bf x}_n:=\alpha_n({\bf x})\in\widehat{E_n},n{\geqslant}1.$ If $t>0$ 
then the sequence of probability measures $(\widehat{P}_{t,{\bf x}_n})_{n{\geqslant}1}$ 
is projective with respect to $(\widehat{E_n},\alpha_n)_{n{\geqslant}1}$, that is 
$\widehat{P^{n+1}_{t,{\bf x}_{n+1}}}\circ \alpha^{-1}_n=\widehat{P^{n}_{t,{\bf x}_{n}}}$   for all  $n{\geqslant}1.$

\vspace{3mm}

By the identification of $S^{\downarrow}$ with $S_{\infty}$, given in (\ref{3.4}), 
the projective system of probabilities from (\ref{3.5}) may be used to apply Bochner-Kolmogorov Theorem
(see, e.g., \cite{BeCi14}) in order to induce  a transition function on the space of all fragmentation sizes, 
as in Proposition 4.7 from \cite{BeDeLu14}:

\noindent
\begin{minipage}[t]{1mm}
{\begin{equation} \label{3.6} \end{equation} }  
\end{minipage}

\vspace{-18.7mm}
\hspace*{4mm}
There exists a Markovian transition function 
$(\widehat{P_t})_{t{\geqslant}0}$ on $S^{\downarrow}$ 
such that for each ${\bf x}\in  S^{\downarrow}$ and $n{\geqslant}1$ we have
$\widehat{P_{t,{\bf x}}}\circ \alpha^{-1}_n=\widehat{P^{n}_{t,{\bf x}_{n}}},$ 
where ${\bf x}_{n}:=\alpha_n({\bf x})$.

\vspace{3mm}

We can state now the result on the fragmentation-branching processes related to the avalanches, 
having as state space the set $S^{\downarrow}$ of al  fragmentation sizes, 
endowed with the topology induced by the identification  from (\ref{3.4}) with $S_\infty$  
(equipped with the product topology).

\begin{thm} \label{thm3.2}  
The following assertions hold.

$(i)$ There exists a branching right (Markov) process $\widehat{X}= (\widehat{X_t})_{t{\geqslant}0}$ with state space $S^{\downarrow}$, having c\` adl\` ag trajectories 
and   the transition function  $(\widehat{P_t})_{t{\geqslant}0}$,  given by (\ref{3.6}).

$(ii)$ If  $ x\in[0,1]$, $\mbox {\bf y}\in S^{\downarrow}$, 
$\mbox{\bf y}{\leqslant}x,$  and $t{\geqslant}0$ we have
$\widehat{X_t} {\leqslant}x$  $\Pp^{\mbox{\small \bf y}}$--a.s.,
where if $\mbox{\bf y}=(y_k)_{k{\geqslant}1}$, ${\bf y}{\leqslant}x$ means that $y_k{\leqslant}x$ for all $k{\geqslant}1$.

$(iii)$ For each $ {\bf x}\in\widehat{E} $, the set $S^{\downarrow}_{\beta, {\bf x}}:= 
\{ {\bf y}=(y_k)_{k{\geqslant}1}\in S^{\downarrow} :  y_k \in E_{\beta, {\bf x}}$ for all  $k{\geqslant}1 \}$ 
is absorbing in
$S^{\downarrow}$, that is, if $\mbox {\bf y}\in S^{\downarrow}_{\beta, {\bf x}}$ then $\Pp^{\mbox{\small \bf y}}$--a.s.
$\widehat{X_t}\in S^{\downarrow}_{\beta, {\bf x}}$ for all $t\geqslant 0$.
\end{thm}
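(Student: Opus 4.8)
The plan is to handle the three assertions separately, obtaining $(i)$ from the projective-limit machinery and reducing $(ii)$ and $(iii)$ to the level-$n$ absorption results already in hand. For $(i)$ the transition function $(\widehat{P_t})_{t\geqslant 0}$ on $S^\downarrow$ is already produced in $(\ref{3.6})$, so what remains is to upgrade it to a genuine branching right process with c\`adl\`ag trajectories. First I would verify that each $\widehat{P_t}$ is a branching kernel on $S^\downarrow$: every $\widehat{P^n_t}$ is branching on $\widehat{E_n}$ by Theorem \ref{thm4.1}$(i)$, each projection $\alpha_n$ is additive for the rearrangement sum (since $\mu_{{\bf x}+{\bf y}}=\mu_{\bf x}+\mu_{\bf y}$), and the intertwining relation $\widehat{P_{t,{\bf x}}}\circ\alpha_n^{-1}=\widehat{P^n_{t,{\bf x}_n}}$ then forces $\widehat{P_{t,\mu+\nu}}=\widehat{P_{t,\mu}}*\widehat{P_{t,\nu}}$, because the two sides agree after every pushforward $\alpha_n$ and $(\ref{3.4})$ is a bijection. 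The existence of a right process with these branching kernels and c\`adl\`ag paths then follows from the construction of \cite{BeDeLu14} (Proposition 4.7 and the accompanying existence theorem), applied through the identification $(\ref{3.4})$ of $S^\downarrow$ with $S_\infty\subseteq\prod_n\widehat{E_n}$ and using that each $\widehat{X^n}$ is standard.

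For $(ii)$ I would run the argument of Theorem \ref{thm4.1}$(ii)$ with the discrete sets replaced by intervals. Fix $x\in[0,1]$, put $\varphi:=\mathbbm{1}_{[d_n,x]}$, and let $M^n_x$ denote the set of configurations in $\widehat{E_n}$ supported in $[d_n,x]$. By Proposition \ref{prop2.1}$(ii)$ the interval $[d_n,x]$ is absorbing for the base process, so $P^n_t\varphi=1$ on $[d_n,x]$; moreover the explicit form $(\ref{branker})$ of $B^n$, together with Remark \ref{rem3.2}$(iii)$, shows that both descendants of a parent of size $\leqslant x$ are again placed at sizes $\leqslant x$ in $E_n$, whence $B^n\widehat\varphi=1$ on $[d_n,x]$. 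The same induction as in Theorem \ref{thm4.1}$(ii)$ then yields $\widehat{P^n_t}\widehat\varphi=1$ on $M^n_x$, i.e. $M^n_x$ is absorbing for $\widehat{X^n}$. Since $\alpha_n({\bf y})\in M^n_x$ whenever ${\bf y}\leqslant x$, relation $(\ref{3.6})$ gives $\alpha_n(\widehat{X_t})\in M^n_x$ for each $n$ and each fixed $t$, $\Pp^{\bf y}$-a.s.; intersecting over the countably many $n$ and using $(\ref{3.4})$ produces $\widehat{X_t}\leqslant x$. The passage to ``for all $t\geqslant 0$ simultaneously'' is obtained from the c\`adl\`ag property of $(i)$ and the fact that the coordinate process $\alpha_n(\widehat{X})$ is a copy of $\widehat{X^n}$, which remains in the absorbing set $M^n_x$ for all $t$.

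Assertion $(iii)$ is the same transfer with Theorem \ref{thm4.1}$(ii)$ now supplying the level-$n$ input directly: that result already states that $\widehat{E_{\beta,{\bf x},n}}$ is absorbing for $\widehat{X^n}$, so for ${\bf y}\in S^\downarrow_{\beta,{\bf x}}$ one has $\alpha_n({\bf y})\in\widehat{E_{\beta,{\bf x},n}}$, hence $\widehat{P^n_{t,\alpha_n({\bf y})}}$ is carried by $\widehat{E_{\beta,{\bf x},n}}$, and the projective-limit argument of $(ii)$ yields $\widehat{X_t}\in S^\downarrow_{\beta,{\bf x}}$ for all $t\geqslant 0$, $\Pp^{\bf y}$-a.s. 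I expect the only real difficulty to be $(i)$, namely passing from the bare transition function to a regular (c\`adl\`ag right) branching process on the infinite-dimensional, non-locally-compact space $S^\downarrow$: this is where the Bochner-Kolmogorov projective-limit construction and the verification that path regularity, and not merely the Markov and branching algebraic identities, survives the limit must be carried out with care. Once $(i)$ is established, $(ii)$ and $(iii)$ follow routinely from the level-$n$ absorption statements.
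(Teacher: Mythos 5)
Your proposal is correct, and for assertions $(i)$ and $(iii)$ it follows essentially the same route as the paper: the branching property of $\widehat{P_t}$ is obtained exactly as you describe (the paper invokes the fact, from \cite{BeDeLu14}, that $\cb(S^{\downarrow})$ is generated by the cylinder functions $f\circ\alpha_n$, which is the precise justification of your claim that two measures agreeing under every pushforward $\alpha_n$ coincide, together with the additivity of $\alpha_n$ so that convolution commutes with pushforward); existence and path regularity are delegated, as in your write-up, to the construction of \cite{BeDeLu14} (the paper cites its Theorem 5.3 and notes that the key ingredient for c\`adl\`ag paths is a compact Lyapunov function, a point you flag as the real difficulty but likewise leave to the cited machinery); and $(iii)$ is proved by the same reduction — right continuity plus closedness of $S^{\downarrow}_{\beta,{\bf x}}$ to reduce to fixed $t$, then (\ref{3.6}) and the monotone limit over $n$ with the level-$n$ input from Theorem \ref{thm4.1}$(ii)$. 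The one place where you genuinely diverge is assertion $(ii)$: the paper simply cites Corollary 5.5 of \cite{BeDeLu14}, whereas you reprove it from scratch by running the integral-equation induction of Theorem \ref{thm4.1}$(ii)$ with $\varphi=\mathbbm{1}_{[d_n,x]}$, using Proposition \ref{prop2.1}$(ii)$ for the base process and the structure of $B^n$ (descendants never exceed the parent size) to get $B^n\widehat\varphi=1$ on $[d_n,x]$, so that $h^m_t\equiv 1$ there for all $m$. This is a sound and more self-contained argument — it makes visible exactly which properties of $X^n$ and $B^n$ are being used — at the cost of one step you should make explicit: your claim that $\alpha_n(\widehat X)$ is a copy of $\widehat{X^n}$ needs the short computation that (\ref{3.6}) is an intertwining, $\widehat{P_t}(f\circ\alpha_n)=(\widehat{P^n_t}f)\circ\alpha_n$, which combined with the Markov property of $\widehat X$ shows the projected process is Markov with transition function $(\widehat{P^n_t})_{t\geqslant 0}$; alternatively you can avoid this entirely by using, as the paper does in $(iii)$, right continuity of $\widehat X$ together with the closedness of $\{{\bf y}\in S^{\downarrow}:{\bf y}\leqslant x\}$ to pass from fixed $t$ to all $t$ simultaneously.
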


\begin{proof}
We prove that each kernel $\widehat{P_t}$, $t\geqslant 0$, is a branching kernel on $S^{\downarrow}$.
Let $ {\bf x},  {\bf y} \in S^{\downarrow}$, we have to show that
$$
\widehat{P_{t, {\bf x}+{\bf y}}}(g)=\widehat{P_{t, {\bf x}}}*\widehat{P_{t, {\bf y}}}(g) \mbox{ for all } g\in bp\cb(S^{\downarrow}).
$$
Since by (5.10) from \cite{BeDeLu14} we have 
$\cb(S^{\downarrow})=\sigma\left(\bigcup_{n{\geqslant}1}\{f\circ\alpha_n\;:\;f\in bp\cb(\widehat{E_n})\}\right)$,
we may assume that $g=f\circ\alpha_n$ for some $n\geqslant 1$, with $f\in bp\cb(\widehat{E_n})$
and using  (\ref{3.6}) it remains  to prove that
\begin{equation} \label{3.7}
\widehat{P^n_{t, {\bf x}_n+{\bf y}_n}}(f)=\widehat{P^n_{t, {\bf x}_n}}*\widehat{P^n_{t, {\bf y}_n}}(f) \mbox{ for all } f\in bp\cb(\widehat{E_n}),
\end{equation}
where ${\bf x}_{n}:=\alpha_n({\bf x})$. 
Here we used the equality $\alpha_n({\bf x}+ {\bf y})=\alpha_n({\bf x})+\alpha_n({\bf y})$, where the second sum
is the usual addition of measures from $\widehat{E_n}$.
But the equality (\ref{3.7})  is precisely the branching property of the kernel $\widehat{P_t^n}$ on $E_n$, which clearly holds 
by Theorem \ref{thm4.1}.

The existence of the process $\widehat{X}$ claimed in assertion $(i)$ and assertion $(ii)$ 
are consequences of \cite{BeDeLu14}, Theorem 5.3 and Corollary 5.5 respectively.
Note that  a key argument in the proof of the path regularity is the existence of a compact Lyapunov function 
(a superharmonic function having compact level sets); 
see \cite{BeLuOp11} and also \cite{BeRo11}  for some related results.

$(iii)$ By the right continuity of the process $\widehat{X}$ and since the set $S^{\downarrow}_{\beta, {\bf x}}$ is closed, it is sufficient to
show that for each $t\geqslant 0$ we have $\widehat{P_t}(\mathbbm{1}_{S^{\downarrow}_{\beta, {\bf x}}})({\bf y})  =1$ for every 
${\bf y}\in S^{\downarrow}_{\beta, {\bf x}}$.
We have: 
${\bf y}\in S^{\downarrow}_{\beta, {\bf x}}$ if and only if $\alpha_n({\bf y})\in \widehat{E_{\beta, {\bf x},n}}$ for all $n\geqslant 1$.
On the other hand by assertion $(ii)$ of Theorem \ref{thm4.1} we get $\widehat{P_t^n}(\mathbbm{1}_{\widehat{E_{\beta, {\bf x}, n}}})({\bf y}_n)=1$
provided that ${\bf y}_n$ belongs to $\widehat{E_{\beta, {\bf x}, n}}$.
Using again (\ref{3.6}), we conclude that
$$
\widehat{P_t}(\mathbbm{1}_{S^{\downarrow}_{\beta, {\bf x}}})({\bf y})=
\widehat{P_{t, {\bf y}}}(\bigcap_{n\geqslant 1} \alpha_n^{-1}(\widehat{E_{\beta, {\bf x}, n}}))=
\lim_n \widehat{P_{t, {\bf y}}}( \alpha_n^{-1}(\widehat{E_{\beta, {\bf x}, n} }))=
\lim_n  \widehat{P^n_{t, {\bf y}_n}}( \widehat{E_{\beta, {\bf x}, n} })=1,
$$
where ${\bf y}_n=\alpha_n({\bf y}) \in \widehat{E_{\beta, {\bf x}, n} }.$
\end{proof}

\noindent
{\bf Final remark.}
$(i)$ Recall that at the origin of the equation $(\ref{stoch-frag-eq})$,
the stochastic   equation of fragmentation for avalanches,
 is the discontinuous fragmentation kernel $F^r$.
This equation should be compared with the stochastic
differential equations of fragmentation with continuous fragmentation kernels from \cite{fournier-giet} and
equation (2.4) from \cite{BeDeLu14}.

$(ii)$ The statement of assertion $(i)$ of Theorem \ref{thm3.2} is valid, with the same proof, for the fragmentation process 
constructed starting with a fragmentation equation (with a continuous fragmentation kernel). 
More precisely, the right (Markov) process, given by Theorem 5.3 from \cite{BeDeLu14}, 
is in addition  a branching process with state space $S^{\downarrow}$.
This  is similar to the branching property  proved for the fragmentation chains in  Proposition 2.1  from \cite{Ber06}.

$(iii)$ Since by assertion $(iii)$ of Theorem \ref{thm3.2} the set $S^{\downarrow}_{\beta, {\bf x}}$ is absorbing, it is possible to restrict 
the fragmentation-branching process $\widehat{X}$ to this set; 
for the restriction procedure see, e.g., $(12.30)$  in \cite{Sh88} and Appendix A.1 in \cite{BeDeLu14}.

$(iv)$ Assertion $(iii)$ of Theorem \ref{thm3.2} emphasizes the fractal property of an avalanche, claimed in the Introduction and
closed to its real physical  properties: 
if we regard the fragmentation--branching process on the set $S^{\downarrow}_{\beta, {\bf x}}$
(which is possible by restriction, according to the previous assertion $(iii)$),
then independent to the sequence of sizes ${\bf x}$ of the initial fragments, from the moment when the avalanche started, and  remaining constant in time,
the ratio  between the resulting fragments are all powers of $\beta$.

$(v)$ In a future paper we intend to give a probabilistic numerical approach to the avalanches, based on the results from this paper,
and using appropriate stochastic equations of  fragmentation.\\

\noindent
{\bf Acknowledgements.} The authors acknowledge enlightening discussions with Ioan R. Ionescu, during the preparation of this paper.
This work was supported by a grant of the Romanian National Authority for Scientific Research, CNCS-UEFISCDI, project number PN-II-ID-PCE-2011-3-0045.  For the third author the research was financed  through the project "Excellence Research Fellowships for Young Researchers", the 2015 Competition, founded by the Research Institute of the University of Bucharest (ICUB).

\end{document}